\newcommand{\Z}{\mathbb{Z}}
\renewcommand{\H}{\mathrm{H}}
\newcommand{\Gm}{{\mathbb{G}_\mathrm{m}}}
\DeclareMathOperator{\Br}{Br}
\DeclareMathOperator{\Pic}{Pic}
\DeclareMathOperator{\Spec}{Spec}
\DeclareMathOperator{\coker}{coker}
\DeclareMathOperator{\Hom}{Hom}
\DeclareMathOperator{\Ext}{Ext}
\newcommand{\Picsh}{{\mathcal{P}\!\mathit{ic}}}
\newcommand{\YPsh}{\tilde{Y}_P}
\newcommand{\XPsh}{\tilde{X}_P}
\newcommand{\tors}{{\mathrm{tors}}}
\newcommand{\et}{{\textrm{\'et}}}
\newcommand{\E}{\mathbf{E}}
\newcommand{\ns}{\mathrm{ns}}
\newtheorem{theorem}{Theorem}
\newtheorem{proposition}[theorem]{Proposition}
\newtheorem{corollary}[theorem]{Corollary}
\title{Brauer groups of singular del Pezzo surfaces}
\author{Martin Bright}
\address{Department of Mathematics \\
American University of Beirut \\
Bliss Street \\ Hamra \\ Beirut \\ Lebanon}
\email{martin.bright@aub.edu.lb}
\subjclass[2010]{Primary 14F22}
\begin{document}

\begin{abstract}
We describe the effect of rational singularities on the Brauer
group of a surface, and compute the Brauer groups of all singular del
Pezzo surfaces over an algebraically closed field.
\end{abstract}

\maketitle

\section{Introduction}

The Brauer group of a variety $X$, which in this paper we take to mean
the cohomology group $\Br X = \H^2(X,\Gm)$, was extensively studied by
Grothendieck~\cite{Grothendieck:GB}.  Brauer groups of singular
varieties are not particularly well behaved: in particular, the Brauer
group of a singular variety need not inject into the Brauer group of
its function field.  The purely local question, of understanding the
Brauer group of the local ring of a singularity, has been well
studied: see, for example,~\cite{DFM:JA-1993}.  An interesting feature
of the results discussed in this article is that the calculation is a
global one, and often leads to elements of the Brauer group which are
locally trivial in the Zariski topology.  One individual example of
such an element was given by Ojanguren~\cite{Ojanguren:JA-1974}, whose
algebra is of order 3 and defined on a singular cubic surface with
three $A_2$ singularities; it will be shown below that this is the
only type of singular cubic surface admitting a 3-torsion Brauer
element.  A more general framework for studying such examples,
described by Grothendieck in~\cite{Grothendieck:GB}, was developed by
De~Meyer and Ford~\cite{DF:AAAM-1990} to give examples of toric
surfaces admitting non-trivial, locally trivial Azumaya algebras.

In this article we take a slightly different approach which, for
varieties with rational singularities, shows how the calculation of
the Brauer group can be made very explicit by using the intersection
pairing.  We then apply this to arguably the simplest interesting
class of singular projective surfaces, namely the singular del Pezzo
surfaces.  These are easy to approach for two reasons: they have
rational singularities; and they come with a natural desingularisation
which is a rational surface.  In Proposition~\ref{prop:br} we show how
to combine the Leray spectral sequence for the desingularisation with
Lipman's detailed description of the local Picard groups above the
singular points \cite{Lipman:IHES-1969}.  In particular, it follows
that the Brauer group may be easily computed using the intersection
form on the desingularisation.  For singular del Pezzo surfaces, this
is well understood, and in section~\ref{sec:dp} we apply
Proposition~\ref{prop:br} to compute the Brauer groups of all singular
del Pezzo surfaces over an algebraically closed field; the Brauer
group depends only on the singularity type of the surface.  The
arguments, and hence the results, are valid in arbitrary
characteristic.

The principal motivation for this article is in applying the Brauer
group to study rational points of del Pezzo surfaces, as first
suggested by Manin~\cite{Manin:GBG}.  For arithmetic questions, it is
often more useful to work with a desingularisation of the original
variety, and so the Brauer group of the singular variety is not of
obvious interest.  However, there are some situations where one cannot
avoid looking at the Brauer group of a singular variety; the situation
we have in mind is that of a model of a del Pezzo surface over a local
ring, where the Brauer group of the (possibly singular) special fibre
must be taken into account.

\section{The Brauer group of a surface with rational singularities}

In this section we study the Brauer group of a surface $Y$ having only
isolated rational singularities, over an algebraically closed base
field.
Following Lipman, by a \emph{desingularisation} $X \to Y$ we
mean a proper birational morphism from a regular scheme $X$.  If $Y$
is a normal surface with finitely many rational singularities, then
there is a unique minimal desingularisation $X \to Y$ which may be
constructed as a sequence of blow-ups at singular points.

We write $\Br(X/Y)$ to mean $\ker(\Br Y \to \Br X)$.  If $Y$ is
integral with function field $K$, then there is a sequence of maps $\Br Y
\to \Br X \to \Br K$.  Since $X$ is regular, $\Br X$ injects into $\Br
K$; it follows that $\Br(K/Y) \cong \Br(X/Y)$.

Whenever $A$ is an Abelian group, $A^*$ denotes the group $\Hom(A,\Z)$.

\begin{proposition}\label{prop:br}
Let $Y$ be a normal surface over an algebraically closed field $k$; suppose that $Y$ has finitely many rational singularities, and let $f\colon X \to Y$ be the minimal desingularisation.  Let $\E$ denote the subgroup of $\Pic X$ generated by the classes of the exceptional curves of the resolution, and let $\theta \colon \Pic X \to \E^*$ be the homomorphism induced by the intersection pairing on $\Pic X$.  Then there is an exact sequence
\[
0 \to \Pic Y \xrightarrow{f^*} \Pic X \xrightarrow{\theta} \E^* \to \Br Y \xrightarrow{f^*} \Br X.
\]
\end{proposition}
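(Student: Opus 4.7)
The plan is to apply the Leray spectral sequence
\[
E_2^{p,q} = \H^p(Y, R^q f_* \Gm) \Rightarrow \H^{p+q}(X, \Gm)
\]
and to read off the claimed sequence directly from its five-term exact sequence of low-degree terms,
\[
0 \to \H^1(Y, f_* \Gm) \to \H^1(X, \Gm) \to \H^0(Y, R^1 f_* \Gm) \xrightarrow{d_2} \H^2(Y, f_* \Gm) \to \H^2(X, \Gm).
\]
The first ingredient is the identification $f_* \Gm = \Gm$, which follows from $f_* \mathcal{O}_X = \mathcal{O}_Y$: $Y$ has rational singularities and is in particular normal, and $f$ is proper birational with regular source. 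Once this is in place, the outer four terms become $\Pic Y$, $\Pic X$, $\Br Y$, and $\Br X$ respectively, so the entire content is concentrated in the middle term and the map out of it.

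The heart of the argument is therefore the computation of $R^1 f_* \Gm$. This is an \'etale sheaf on $Y$ whose stalk at a geometric point $\bar y$ is the Picard group of $X \times_Y \Spec \mathcal{O}_{Y,y}^{\mathrm{sh}}$. At a smooth point the stalk vanishes since $f$ is an isomorphism in an \'etale neighbourhood. At a singular point $y$, Lipman's results \cite{Lipman:IHES-1969} for rational surface singularities assert precisely that this local Picard group is identified, via the intersection pairing with the exceptional components, with $\E_y^* = \Hom(\E_y, \Z)$, where $\E_y \subset \Pic X$ is the sublattice generated by the components of the exceptional divisor above $y$. Consequently $R^1 f_* \Gm$ is a skyscraper sheaf supported on the (finite) singular locus, and summing over the singular points gives $\H^0(Y, R^1 f_* \Gm) = \bigoplus_y \E_y^* = \E^*$.

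It remains to identify the edge map $\Pic X \to \E^*$ with $\theta$. By construction the edge map sends a line bundle $L$ on $X$ to the collection of its restrictions to $X \times_Y \Spec \mathcal{O}_{Y,y}^{\mathrm{sh}}$, one for each singular point $y$, viewed as classes in the local Picard groups. Since Lipman's isomorphism of those local Picard groups with $\E_y^*$ is itself given by intersecting with the exceptional components, the composite agrees with $\theta$. The main obstacle is the bookkeeping in this last identification, together with extracting from \cite{Lipman:IHES-1969} the precise form needed: that for a rational surface singularity the local Picard group is the entire dual lattice $\E_y^*$ and that the restriction map from global Picard is given by intersection numbers. Once this is verified, substituting into the five-term sequence yields the proposition with no further work — in particular, the sequence terminates at $\Br X$ regardless of $R^2 f_* \Gm$, so no vanishing result for $R^2 f_* \Gm$ is needed.
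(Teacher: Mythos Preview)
Your proposal is correct and follows essentially the same route as the paper: the paper packages the Leray five-term sequence via the relative Picard sheaf $\Picsh_{X/Y} = R^1 f_* \Gm$ and the exact sequence of \cite[8.1, Prop.~4]{BLR:NM}, then identifies $\Picsh_{X/Y}(Y)$ with $\E^*$ by localising to the singular points and invoking Lipman exactly as you do. The only cosmetic difference is that the paper computes the stalks of $\Picsh_{X/Y}$ by reapplying the same exact sequence over $\Spec\mathcal{O}_{Y,P}^{\mathrm{sh}}$ (using $\Pic=\Br=0$ there), whereas you appeal directly to the general description of stalks of higher direct images.
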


\begin{proof}
Since $f$ is proper and birational, we have $f_* \Gm = \Gm$.  It follows that,
for any flat morphism of schemes $Y' \to Y$, if $f_{Y'} \colon X \times_Y
Y' \to Y'$ denotes the base change of $f$, the following sequence is
exact (see~\cite[Section~8.1, Proposition~4]{BLR:NM}):
\begin{equation}\label{eq:leray}
0 \to \Pic Y' \xrightarrow{f^*_{Y'}} \Pic (X \times_Y Y') \to \Picsh_{X/Y}(Y') \to \Br Y' \xrightarrow{f^*_{Y'}} \Br (X \times_Y Y').
\end{equation}

Taking $Y'=Y$ in~\eqref{eq:leray} gives the exact sequence
\begin{equation}\label{eq:step0}
0 \to \Pic Y \xrightarrow{f^*} \Pic X \to \Picsh_{X/Y}(Y) \to \Br Y \xrightarrow{f^*} \Br X.
\end{equation}
So it will be enough to exhibit an isomorphism $\alpha \colon \Picsh_{X/Y}(Y) \to \E^*$ such that composing $\alpha$ with the
natural homomorphism $\Pic X \to \Picsh_{X/Y}(Y)$ gives the homomorphism
$\theta$ described in the statement of the theorem.
From now on, we work with
$\Picsh_{X/Y}$ as a sheaf only on the small \'etale site of $Y$, in order
to be able to talk about its stalks.

\paragraph{Step 1:  Localisation} 
As the sheaf $\Picsh_{X/Y}$ on
$Y_\et$ is supported on the singular points, the natural map
\begin{equation}\label{eq:stalks}
\Picsh_{X/Y}(Y) \to \prod_{P \text{ singular}}(\Picsh_{X/Y})_P,
\end{equation}
from the global sections of $\Picsh_{X/Y}$ to the direct product of its stalks at
the singular points, is an isomorphism.


At each singular point $P$, let $\YPsh$ denote $\Spec
\mathcal{O}_{Y,P}^{\textrm{sh}}$, the spectrum of the Henselisation of the
local ring at $P$, and set $\XPsh = X \times_Y \YPsh$.  The stalk
of $\Picsh_{X/Y}$ at the geometric point $P$ is naturally isomorphic
to $\Picsh_{X/Y}(\YPsh)$ which is simply $\Pic \XPsh$, as is seen by
taking $Y' = \YPsh$ in~\eqref{eq:leray} and using the facts that $\Pic
\YPsh$ and $\Br \YPsh$ are trivial (for the latter, see~\cite[IV,
  Corollary~1.7]{Milne:EC}).  Combining this with the
isomorphism~\eqref{eq:stalks}, we see that the natural map
\begin{equation*}\label{eq:step1}
\Picsh_{X/Y}(Y) \to \prod_P \Pic \XPsh
\end{equation*}
is an isomorphism.



\paragraph{Step 2: Lipman's description of $\Pic\XPsh$} For each singular point $P$ of $Y$, denote by $\E_P$ the subgroup of $\Pic X$ generated by the exceptional curves lying over $P$.  Let $Y_P$ denote the spectrum of the Zariski local ring of $Y$ at $P$, and $X_P = X \times_Y Y_P$.  We will use $\theta_P$ to denote the homomorphism $\Pic X_P \to \E_P^*$ induced by the intersection pairing on $X$.\footnote{Lipman's definition of the map $\theta_P$ is slightly more general, involving dividing by the least degree of an invertible sheaf on each exceptional curve.  Since we are working over an algebraically closed field, all of our exceptional curves have a $k$-point, hence an invertible sheaf of degree $1$.}
Lipman~\cite[Part~IV]{Lipman:IHES-1969} studied the kernel and cokernel of $\theta_P$ in detail, defining an exact sequence
\begin{equation*}\label{eq:lipman}
0 \to \Pic^0 X_P \to \Pic X_P \xrightarrow{\theta_P} \E_P^* \to G(Y_P) \to 0
\end{equation*}
attached to the resolution $X_P \to Y_P$, and showed that $\Pic^0
X_P=0$ when $Y_P$ has a rational singularity, and that $G(Y_P)=0$ when
$Y_P$ is Henselian.  We thus obtain isomorphisms $\Pic\XPsh \cong
\E_P^*$, such that the composite homomorphism $\Pic X \to \prod_P \Pic\XPsh
\to \prod_P \E_P^*$ is $\theta_P$.

\paragraph{Step 3: Globalisation} Finally, note that two exceptional
curves lying above distinct singularities of $Y$ are disjoint, so in
particular have intersection number zero.  Therefore the subgroups
$\E_P \subseteq \Pic X$ are mutually orthogonal, and so $\E \cong \bigoplus_P \E_P$ and $\E^* \cong \prod_P \E_P^*$.

It is now easily verified that replacing $\Picsh_{X/Y}(Y)$
in~\eqref{eq:step0} with $\E^*$ does indeed lead to the desired exact
sequence.
\end{proof}

\begin{corollary}\label{cor:local}
If $P$ is a singular point of $Y$, then $\Br(X_P/Y_P)$ is isomorphic to the
cokernel of $\theta_P \colon \Pic X \to \E_P^*$, which is equal to
Lipman's group $G(Y_P)$.
\end{corollary}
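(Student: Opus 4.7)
My plan is to deduce the corollary by applying the conclusion of Proposition~\ref{prop:br} to the single-singularity local scheme $Y_P$, and then invoking Lipman's sequence to identify the resulting cokernel.

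First I would apply Proposition~\ref{prop:br} with $Y$ replaced by the local scheme $Y_P$. It has just one singular point, with minimal desingularisation $X_P \to Y_P$ obtained by restricting $f$, and the group of exceptional classes in $\Pic X_P$ is exactly $\E_P$. The proposition therefore yields an exact sequence whose tail reads
\[
\Pic X_P \xrightarrow{\theta_P} \E_P^* \to \Br Y_P \xrightarrow{f^*} \Br X_P,
\]
from which $\Br(X_P/Y_P) = \ker(f^*)$ is visibly isomorphic to $\coker \theta_P$.

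Next, to identify $\coker \theta_P$ with Lipman's group $G(Y_P)$, I would simply quote the exact sequence recalled in Step~2 of the proof of Proposition~\ref{prop:br},
\[
0 \to \Pic^0 X_P \to \Pic X_P \xrightarrow{\theta_P} \E_P^* \to G(Y_P) \to 0,
\]
and use rationality of the singularity at $P$ to deduce $\Pic^0 X_P = 0$; the cokernel of $\theta_P$ then reads off as $G(Y_P)$.

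The one point I would need to check carefully — and which I expect to be the main obstacle, though a minor one — is the application of Proposition~\ref{prop:br} to $Y_P$ itself, since that proposition was stated for a normal surface over an algebraically closed field whereas $Y_P$ is only a two-dimensional normal local scheme. Inspection of the proof shows, however, that the only inputs used are the identity $f_*\Gm = \Gm$, the exact sequence~\eqref{eq:leray}, and Lipman's local theorems, all of which are valid for $Y_P$. Alternatively, one could just rerun Steps~1--3 of the proof of Proposition~\ref{prop:br} with $Y$ replaced by $Y_P$; they collapse considerably, as there is only one singular point and hence no globalisation to perform in Step~3.
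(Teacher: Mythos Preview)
Your approach is the same as the paper's: apply Proposition~\ref{prop:br} to $Y_P$ and read off the cokernel. There is, however, one step you have overlooked. The corollary is stated for $\theta_P \colon \Pic X \to \E_P^*$, with the \emph{global} Picard group $\Pic X$ as the source, whereas applying Proposition~\ref{prop:br} to $Y_P$ only gives you the cokernel of $\Pic X_P \to \E_P^*$. To bridge this, the paper observes that the restriction map $\Pic X \to \Pic X_P$ is surjective (because $X$ is smooth, so every divisor on the open subscheme $X_P$ extends to a divisor on $X$), and hence the two cokernels agree. You should include this observation to complete the argument.

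A minor remark on your second step: the identification $\coker(\Pic X_P \xrightarrow{\theta_P} \E_P^*) = G(Y_P)$ is simply the definition of $G(Y_P)$ from Lipman's exact sequence; it does not require $\Pic^0 X_P = 0$. Rationality of the singularity is needed for Proposition~\ref{prop:br} to apply in the first place, not for this particular identification.
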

\begin{proof}
Applying the proposition to $Y_P$ shows that $\Br(X_P/Y_P)$ is
isomorphic to $\coker(\Pic X_P \to \E_P^*)$, which by definition is
equal to $G(Y_P)$.  Since $X$ is smooth, the restriction map $\Pic X
\to \Pic X_P$ is surjective, and the statement follows.
\end{proof}

\begin{corollary}\label{cor:one}
If $Y$ has only one singularity $P$, then $\Br(X/Y) \cong
\Br(X_P/Y_P)$.
\end{corollary}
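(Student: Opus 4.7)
The plan is to read both sides of the asserted isomorphism off Proposition~\ref{prop:br} and Corollary~\ref{cor:local}, and then observe that when there is a single singularity the two cokernels that appear are literally the same group.

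First, I would apply Proposition~\ref{prop:br} to $Y$ itself.  By the definition of $\Br(X/Y)$ as $\ker(\Br Y \to \Br X)$, the five-term exact sequence supplied by the proposition identifies $\Br(X/Y)$ with $\coker\bigl(\theta\colon \Pic X \to \E^{*}\bigr)$.  Next I would apply Corollary~\ref{cor:local} at the singular point $P$, which gives $\Br(X_P/Y_P) \cong \coker\bigl(\theta_P\colon \Pic X \to \E_P^{*}\bigr)$ (equivalently, Lipman's group $G(Y_P)$).

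The key remaining observation is that when $P$ is the unique singularity of $Y$, the orthogonal decomposition $\E \cong \bigoplus_Q \E_Q$ and dual factorisation $\E^{*} \cong \prod_Q \E_Q^{*}$ used in Step~3 of the proof of Proposition~\ref{prop:br} reduce to the single summand $\E = \E_P$, $\E^{*} = \E_P^{*}$.  Under this identification, the global map $\theta$ coincides with $\theta_P$ (both are induced by intersecting against the exceptional curves lying over $P$, which are all the exceptional curves there are).  Hence $\coker\theta = \coker\theta_P$, and combining the two descriptions yields the claimed isomorphism $\Br(X/Y) \cong \Br(X_P/Y_P)$.

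There is no real obstacle here: the statement is a direct consequence of specialising the general result to the case of a single singular point, and the only thing to check is the compatibility of the two cokernel descriptions, which is immediate because both are instances of the exact sequence~\eqref{eq:step0} (applied to $Y$ and to $Y_P$ respectively).
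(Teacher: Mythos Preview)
Your proposal is correct and follows essentially the same approach as the paper: the paper's proof simply notes that $\E = \E_P$ in this case and invokes Corollary~\ref{cor:local}, implicitly using Proposition~\ref{prop:br} to identify $\Br(X/Y)$ with $\coker\theta$. You have merely spelled out in more detail what the paper leaves implicit.
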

\begin{proof}
In this case $\E = \E_P$, so the statement follows immediately from
Corollary~\ref{cor:local}.
\end{proof}

\section{Singular del Pezzo surfaces}
\label{sec:dp}

In this section we apply Proposition~\ref{prop:br} to compute the Brauer
groups of singular del Pezzo surfaces.  We refer
to~\cite{CT:PLMS-1988} and~\cite{Demazure:SDP} for background details on
singular del Pezzo surfaces.  

Let $X$ be a generalised del Pezzo surface over an algebraically
closed field $k$, and $f\colon X \to Y$ the morphism contracting the $(-2)$-curves (and nothing else), so that $Y$ is the corresponding singular del Pezzo surface.  The Picard group
of $X$ fits into a short exact sequence
\[
0 \to Q \to \Pic X \xrightarrow{(\cdot,K_X)} \Z \to 0
\]
where $Q$ is the subgroup orthogonal to the canonical class $K_X$
under the intersection pairing.  The exceptional curves of $f$ are all
contained in $Q$.  Let $\E$
denote the subgroup of $Q$ generated by all the exceptional curves of
$X \to Y$ (equivalently, all the $(-2)$-curves on $X$).

\begin{proposition}\label{prop:brdp}
$\Br Y$ is isomorphic to $(Q/\E)_\tors$.
\end{proposition}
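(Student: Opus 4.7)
My plan is to combine Proposition~\ref{prop:br} with two standard facts about a smooth projective rational surface $X$ over an algebraically closed field: that $\Br X = 0$, and that the intersection pairing makes $\Pic X$ into a unimodular lattice. A short $\Ext^1$ computation then completes the argument.

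First, since $X$ is a generalised del Pezzo surface, it is a smooth projective rational surface, so I would invoke the classical vanishing $\Br X = 0$ (via birational invariance of $\Br$ for smooth projective varieties over an algebraically closed field, together with $\Br\P^2 = 0$). The exact sequence in Proposition~\ref{prop:br} then shortens to
\[
0 \to \Pic Y \xrightarrow{f^*} \Pic X \xrightarrow{\theta} \E^* \to \Br Y \to 0,
\]
so $\Br Y \cong \coker(\theta)$.

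The heart of the proof is to use that $\Pic X$ is unimodular under the intersection pairing --- for instance, because $X$ is obtained from $\P^2$ by a sequence of blow-ups of (possibly infinitely near) points, giving a basis whose intersection matrix has determinant $\pm 1$. This identifies $\Pic X \xrightarrow{\sim} (\Pic X)^*$ via $L \mapsto (M \mapsto L \cdot M)$, and under this isomorphism $\theta$ is precisely the restriction map $(\Pic X)^* \to \E^*$ dual to the inclusion $\E \hookrightarrow \Pic X$. Applying $\Hom(-, \Z)$ to the short exact sequence $0 \to \E \to \Pic X \to \Pic X / \E \to 0$, and using that $\Ext^1(\Pic X, \Z) = 0$ since $\Pic X$ is free, I obtain
\[
0 \to (\Pic X / \E)^* \to (\Pic X)^* \to \E^* \to \Ext^1(\Pic X / \E, \Z) \to 0.
\]
The standard identification $\Ext^1(A, \Z) \cong A_\tors$ for finitely generated abelian $A$ then gives $\coker(\theta) \cong (\Pic X / \E)_\tors$.

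Finally, to pass from $\Pic X / \E$ to $Q / \E$, I would divide the sequence $0 \to Q \to \Pic X \to \Z \to 0$ by $\E \subseteq Q$ to obtain $0 \to Q/\E \to \Pic X/\E \to \Z \to 0$. This splits because $\Z$ is free, so the torsion subgroups coincide, yielding $\Br Y \cong (Q/\E)_\tors$. I do not anticipate a serious obstacle; each step is a short formal manipulation, and the only non-formal inputs are the two classical facts about rational surfaces over algebraically closed fields (vanishing of $\Br X$ and unimodularity of $\Pic X$), both of which are well known.
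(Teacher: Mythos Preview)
Your proof is correct and close in spirit to the paper's, but the linear algebra is organised a little differently. The paper factors $\theta$ as $\Pic X \to Q^* \to \E^*$ and shows the first map is surjective by citing Demazure's explicit simple-root basis of $Q$, which extends to a basis of $\Pic X$; the $\Ext^1$ computation is then applied directly to $0 \to \E \to Q \to Q/\E \to 0$. You instead invoke unimodularity of the full lattice $\Pic X$ to identify $\theta$ with the dual of $\E \hookrightarrow \Pic X$, run the $\Ext^1$ step on $0 \to \E \to \Pic X \to \Pic X/\E \to 0$, and only at the end pass from $(\Pic X/\E)_\tors$ to $(Q/\E)_\tors$ via the split quotient sequence. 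Your route is arguably cleaner in that it replaces the reference to Demazure by the single well-known fact that $\Pic X$ is unimodular; the paper's route has the mild advantage of working inside $Q$ from the outset, which is the lattice used in the subsequent case-by-case computations. One small caveat: not every generalised del Pezzo surface is literally an iterated blow-up of $\P^2$ (the Hirzebruch surface $\mathbb{F}_2$ in degree $8$ is not), but $\Pic X$ is still unimodular there, so your argument is unaffected.
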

\begin{proof}
Firstly, $\Br X$ is trivial, for $X$ is a rational surface.  By Proposition~\ref{prop:br}, $\Br Y$ is therefore isomorphic to the cokernel of the map $\theta \colon \Pic X \to \E^*$.  Now $\theta$ factors as $\Pic X \to Q^* \to \E^*$,
giving an exact sequence
\[
\coker(\Pic X \to Q^*) \to \Br Y \to \coker(Q^* \to \E^*) \to 0.
\]
It follows from the description of $Q$ in~\cite[II.4]{Demazure:SDP} that $\Pic X
\xrightarrow{\theta} Q^*$ is surjective.  Indeed, one easily checks
that the basis of $Q$ given by the simple roots $\alpha_i$ described there
can be extended (for example, by adjoining one exceptional class
$E_1$) to a basis of $\Pic X$.  So we are left with an isomorphism
between $\Br Y$ and $\coker(Q^* \to \E^*)$.  To compute the latter
group, we take the short exact sequence
\[
0 \to \E \to Q \to (Q/\E) \to 0
\]
and apply $\Hom(\cdot, \Z)$ to obtain the longer exact sequence
\[
0 \to (Q/\E)^* \to Q^* \to \E^* \to \Ext^1(Q/\E,\Z) \to
\Ext^1(Q,\Z).
\]
As $Q$ is a free Abelian group, we have $\Ext^1(Q,\Z)=0$, and
therefore $\Br Y$ is isomorphic to $\Ext^1(Q/\E,\Z)$, which by
a standard calculation is isomorphic to $(Q/\E)_\tors$.
\end{proof}

We note the following interesting corollary.
\begin{corollary}
Let $Y$ be a singular del Pezzo surface over an algebraically closed
field, and denote by $Y^\ns$ the non-singular locus of $Y$.  Then
there is an isomorphism of abstract groups $\Br Y \cong
\Pic(Y^\ns)_\tors$.
\end{corollary}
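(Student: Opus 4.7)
The plan is to identify $\Pic Y^{\ns}$ with $\Pic X / \E$, so that its torsion subgroup matches $(Q/\E)_\tors \cong \Br Y$ as computed in Proposition~\ref{prop:brdp}. First, the minimal desingularisation $f \colon X \to Y$ restricts to an isomorphism over $Y^{\ns}$, with preimage the open subscheme $U \subset X$ obtained by removing the union of the $(-2)$-curves. Hence $\Pic Y^{\ns} \cong \Pic U$.

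Next, since $X$ is a smooth surface, the standard excision sequence for divisor class groups gives
\[
\bigoplus_C \Z \cdot [C] \to \Pic X \to \Pic U \to 0,
\]
where $C$ ranges over the $(-2)$-curves on $X$. The image of the left-hand map is by definition the subgroup $\E$, so $\Pic U \cong \Pic X / \E$. Then, quotienting the short exact sequence $0 \to Q \to \Pic X \to \Z \to 0$ (the second map being intersection with $K_X$) by the subgroup $\E \subseteq Q$ produces
\[
0 \to Q/\E \to \Pic X / \E \to \Z \to 0.
\]
Since $\Z$ is torsion-free, one has $(\Pic X / \E)_\tors = (Q/\E)_\tors$, and Proposition~\ref{prop:brdp} closes the argument.

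I do not expect any substantive obstacle. The only non-formal ingredient is the identification $\Pic Y^{\ns} \cong \Pic X / \E$, which rests on $f$ being an isomorphism off the singular locus combined with the standard excision sequence. The statement is phrased as an isomorphism of \emph{abstract} groups only because the argument passes through the intermediate group $Q/\E$ and does not produce a canonical map in either direction between $\Br Y$ and $\Pic(Y^{\ns})_\tors$.
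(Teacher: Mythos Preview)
Your proof is correct and follows essentially the same route as the paper: identify $Y^{\ns}$ with the complement of the exceptional curves in $X$, deduce $\Pic Y^{\ns}\cong(\Pic X)/\E$, and then pass to torsion via the sequence $0\to Q/\E\to(\Pic X)/\E\to\Z\to 0$ to match $(Q/\E)_\tors$ with Proposition~\ref{prop:brdp}. The paper's argument is the same but more compressed, leaving the torsion-comparison step implicit.
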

\begin{proof}
Since $Y^\ns$ is isomorphic to the complement of the exceptional
curves in $X$, we have $\Pic Y^\ns \cong (\Pic X)/\E$ and so
$\Pic(Y^\ns)_\tors \cong (Q/\E)_\tors$.
\end{proof}

It remains to enumerate the possible singularity types of del Pezzo
surfaces and to compute $Q/\E$ in each case.  The algorithm for
listing the possible configurations of $(-2)$-curves is well known, as
is the list of possible configurations, so we only summarise the
algorithm very briefly.  The free Abelian group $Q$, together with the
negative definite intersection pairing, is isomorphic to the root
lattice of a particular root system depending only on the degree of
the surface.  Within this root lattice, the exceptional divisors of
the desingularisation $X \to Y$ form a set of simple roots in some
sub-root system, and indeed form a $\Pi$-system in the sense of
Dynkin~\cite[\S 5]{Dynkin:AMST-1957}.  To list the $\Pi$-systems
contained in $Q$, we use the following two theorems
from~\cite{Dynkin:AMST-1957}:
\begin{itemize}
\item Theorem~5.2: every $\Pi$-system is contained in a $\Pi$-system which is of
  maximal rank, that is, which spans $Q$ as a vector space;
\item Theorem~5.3: the $\Pi$-systems of maximal rank may be all be obtained from
  some set of simple roots in $Q$ by iterating the following
  procedure, called an \emph{elementary transformation}: starting with a set of simple roots, choose one connected
  component of the associated Dynkin diagram; adjoin the most
  negative root of that component and discard one of the original
  simple roots of that component.
\end{itemize}
So, starting from any choice of simple roots in $Q$, we can obtain all
$\Pi$-systems up to the action of the Weyl group.  Not quite all of
these can actually be achieved as configurations of $(-2)$-curves: see~\cite{Urabe:S1981}, though it is not immediately clear that the methods there also apply in positive characteristic.  

Let us remark that, given a root system $R$, the primes dividing
$\#(\Z R/\Z R')_\tors$ for $R'$ a closed subsystem of $R$ are called
\emph{bad primes}: see, for example, \cite[Appendix~B]{MT:LAG}.  A
corollary of Proposition~\ref{prop:brdp} is that the primes which can
divide the order of the Brauer group of a singular del Pezzo surface
of degree $d$ are the bad primes of the associated root system.  It
turns out that the bad primes are simply those occurring as
coefficients when a maximal root is expressed in terms of simple
roots, and so they are easily listed.  There are no bad primes for
$A_n$; $2$ is the only bad prime for $D_n$ ($n \ge 4$); $2$ and $3$
are the bad primes for $E_6$ and $E_7$; and $E_8$ has bad primes $2$,
$3$ and $5$.

\begin{theorem}
Let $Y$ be a singular del Pezzo surface of degree $d$ over an
algebraically closed field.  If $d \ge 5$, then $\Br Y = 0$.  If $1
\le d \le 4$, then the Brauer group of $Y$ is determined by its
singularity type; the singularity types giving rise to non-trivial
Brauer groups are listed in Tables~\ref{table:deg4}--\ref{table:deg1}.
Each class in $\Br Y$ is represented by an Azumaya algebra.  Except
for the singularity types $A_7$ in degree 2, and $A_7$, $A_8$ and
$D_8$ in degree 1, the corresponding Azumaya algebras are locally
trivial in the Zariski topology.
\end{theorem}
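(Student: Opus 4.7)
The plan is to apply Proposition~\ref{prop:brdp} throughout, reducing everything to the integral linear algebra of the inclusion $\E \hookrightarrow Q$. For $d \ge 5$, the ambient root system $Q$ associated to the generalised del Pezzo $X$ is a sum of root systems of type $A$, all of which have no bad primes in the sense discussed above; hence $(Q/\E)_\tors = 0$ for every sub-$\Pi$-system $\E$, and Proposition~\ref{prop:brdp} immediately gives $\Br Y = 0$.

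For $1 \le d \le 4$, I would enumerate the $\Pi$-systems in $D_5$, $E_6$, $E_7$, $E_8$ up to Weyl action by running Dynkin's elementary-transformation algorithm (Theorem~5.3 of \cite{Dynkin:AMST-1957}) starting from the standard basis of simple roots.  For each $\Pi$-system $\E$, I would compute $(Q/\E)_\tors$ via the Smith normal form of the embedding matrix: this depends only on the Dynkin type of $\E$ relative to $Q$, and so the Brauer group depends only on the singularity type.  The bad-prime observation limits the primes that can appear and cuts the bookkeeping substantially.  To pass from abstract $\Pi$-systems to configurations actually realised by $(-2)$-curves, I would cite Urabe's list, and argue that the relevant constructions descend to arbitrary characteristic (for instance by exhibiting degenerations of smooth del Pezzos over $\Z$, or by the explicit models in \cite{CT:PLMS-1988}).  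The tables are populated by recording the singularity types whose associated $(Q/\E)_\tors$ is non-zero.

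The Azumaya claim follows because $Y$ is a quasi-projective noetherian scheme, so by a theorem of Gabber every torsion class in $\H^2(Y,\Gm)$ is represented by an Azumaya algebra; $\Br Y$ is torsion since Proposition~\ref{prop:brdp} identifies it with a finite abelian group.  For Zariski local triviality, the criterion is that the image of the class in $\Br \mathcal{O}_{Y,P}$ vanishes at every point of $Y$.  At smooth points this is automatic, since $\Br X = 0$ and the Brauer group of a regular two-dimensional local ring injects into that of the function field.  At each singular point $P$, the local Brauer group is $G(Y_P)$ by Corollary~\ref{cor:local}, and the restriction map $\Br Y \to G(Y_P)$ is, under the isomorphisms in the proof of Proposition~\ref{prop:br}, the projection onto the $P$-summand of $\E^* = \bigoplus_P \E_P^*$ reduced modulo the image of $\Pic X$.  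Case by case, one checks that this projection is zero except precisely for the four configurations singled out in the statement, where one can exhibit a generator of $\Br Y$ whose image in the relevant $G(Y_P)$ is non-zero.

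I expect the main obstacle to be combinatorial rather than conceptual: across the four degrees there are a substantial number of $\Pi$-systems to handle, each requiring a small integer-lattice computation for $(Q/\E)_\tors$ and a separate verification of the $G(Y_P)$ component.  Particular care is needed to confirm that Urabe's realisability results extend to positive characteristic, and to isolate precisely the four exceptional singularity types where the natural map $\Br Y \to \bigoplus_P G(Y_P)$ is non-zero.
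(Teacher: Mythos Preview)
Your proposal is correct and, for the computation of $\Br Y$ itself, tracks the paper closely: Proposition~\ref{prop:brdp}, the bad-prime argument for $d\ge 5$, Dynkin's enumeration of $\Pi$-systems for $1\le d\le 4$, and Gabber/de~Jong for the Azumaya representability.

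The one place where the paper does something noticeably different is the Zariski-local triviality. You propose to verify, for every singularity type in the tables and every singular point $P$, that the projection $\Br Y \to G(Y_P)$ vanishes, treating this as an additional round of case-by-case lattice computations. The paper short-circuits this. Since $\coker(\Pic X \to \E_P^*)$ depends only on the degree and on the type of the singularity at $P$ (not on what other singularities $Y$ has), one may replace $Y$ by a del Pezzo of the same degree whose \emph{only} singularity has the type of $P$; Corollary~\ref{cor:one} then gives $G(Y_P)=\Br(X_P/Y_P)=\Br Y$ for that single-singularity surface, and this value has already been computed and recorded in the tables. Reading off the single-singularity rows with non-trivial Brauer group yields exactly $A_7$ in degree~$2$ and $A_7$, $A_8$, $D_8$ in degree~$1$. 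Your direct approach would succeed, but the paper's reduction turns the question into a table lookup with no new computation.

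One small caution: your phrase ``depends only on the Dynkin type of $\E$ relative to $Q$'' needs to mean the Weyl orbit of the embedding, not the abstract Dynkin type of $\E$. As the tables note, types such as $4A_1$ admit Weyl-inequivalent embeddings into $E_7$ and $E_8$ with different $(Q/\E)_\tors$, so the Smith normal form must be run per embedding, not per abstract type.
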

\begin{proof}
For $d \ge 5$, the relevant root system is of type $A_n$, so there are
no bad primes and the Brauer group is trivial.  For $1 \le d \le 4$,
the results of applying the algorithm described above are listed in
the tables.  Since $\Br Y$ is torsion, it follows from a result proved
by Gabber and, independently, by de~Jong~\cite{DeJong:Gabber} that
every class is represented by an Azumaya algebra.  It remains to prove
the statement about Zariski-local triviality.  If $P$ is a singular
point of a singular del Pezzo surface $Y$ then
Corollary~\ref{cor:local} shows that, in the notation used there,
$\Br(X_P/Y_P) \cong \coker(\Pic X \to \E_P^*)$; it is enough to
show that $\Br(X_P/Y_P) = 0$.  Replacing $Y$ by a del Pezzo surface of
the same degree, but with only one singularity of the same type as
$P$, changes neither $\Pic X$, $\E_P^*$ nor the map between them, so
we may assume that $P$ is the only singularity of $Y$.  Then
$\Br(X_P/Y_P) = \Br(X/Y) = \Br Y$ by Corollary~\ref{cor:one}.  But the
tables show that $\Br Y=0$, except in the cases listed above.
\end{proof}

\begin{table}[p]
\caption{Brauer groups of singular del Pezzo surfaces of degree $4$}
\begin{tabular}{ll|ll}
Singularity type & Brauer group &
Singularity type & Brauer group \\
\hline
$2A_1 + A_3$ & $\Z/2\Z$ &
$4A_1$ & $\Z/2\Z$ 
\end{tabular}
\label{table:deg4}
\end{table}

\begin{table}[p]
\caption{Brauer groups of singular del Pezzo surfaces of degree $3$}
\begin{tabular}{ll|ll}
Singularity type & Brauer group &
Singularity type & Brauer group \\
\hline
$A_1 + A_5$ & $\Z/2\Z$ &
$2A_1 + A_3$ & $\Z/2\Z$ \\
$4A_1$ & $\Z/2\Z$ & $3A_2$ & $\Z/3\Z$
\end{tabular}
\end{table}

\begin{table}[p]
\caption{Brauer groups of singular del Pezzo surfaces of degree $2$}
\begin{threeparttable}
\begin{tabular}{ll|ll}
Singularity type & Brauer group &
Singularity type & Brauer group \\
\hline
$A_1 + 2A_3$ & $\Z/4\Z$ &
$5A_1$ & $\Z/2\Z$ \\
$A_1 + A_5$ & $\Z/2\Z$ &
$6A_1$ & $(\Z/2\Z)^2$ \\
$A_1 + D_6$ & $\Z/2\Z$ &
$7A_1$ $\dagger$ & $(\Z/2\Z)^3$ \\
$2A_1 + A_3$ & $\Z/2\Z$ &
$A_2 + A_5$ & $\Z/3\Z$ \\
$2A_1 + D_4$ & $\Z/2\Z$ &
$3A_2$ & $\Z/3\Z$ \\
$3A_1 + A_3$ & $\Z/2\Z$ &
$2A_3$ & $\Z/2\Z$ \\
$3A_1 + D_4$ & $(\Z/2\Z)^2$ &
$A_7$ & $\Z/2\Z$ \\
$4A_1$ * & $\Z/2\Z$
\end{tabular}
\begin{tablenotes}
\item [*] There are (up to the action of the Weyl group) two different
  ways of embedding $4A_1$ into $E_7$, and so two different
  singularity types of degree $2$ del Pezzo surface with root system
  $4A_1$.  One of these has Brauer group $\Z/2\Z$; the other has
  trivial Brauer group.
\item [$\dagger$] This sub-root system does not arise from a del Pezzo surface~\cite{Urabe:S1981}.
\end{tablenotes}
\end{threeparttable}
\end{table}

\begin{table}[p]
\caption{Brauer groups of singular del Pezzo surfaces of degree $1$}
\begin{threeparttable}
\begin{tabular}{ll|ll}
Singularity type & Brauer group &
Singularity type & Brauer group \\
\hline
$A_1+A_2+A_5$ & $\Z/6\Z$ &
$4A_1+A_3$ & $(\Z/2\Z)^2$ \\
$A_1+3A_2$ & $\Z/3\Z$ &
$4A_1+D_4$ $\dagger$ & $(\Z/2\Z)^3$ \\
$A_1+2A_3$ & $\Z/4\Z$ &
$5A_1$ & $\Z/2\Z$ \\
$A_1+A_5$ * & $\Z/2\Z$ &
$6A_1$ & $(\Z/2\Z)^2$ \\
$A_1+A_7$ & $\Z/4\Z$ &
$7A_1$ $\dagger$ & $(\Z/2\Z)^3$ \\
$A_1+D_6$ & $\Z/2\Z$ &
$8A_1$ $\dagger$ & $(\Z/2\Z)^4$ \\
$A_1+E_7$ & $\Z/2\Z$ &
$A_2+A_5$ & $\Z/3\Z$ \\
$2A_1+A_2+A_3$ & $\Z/2\Z$ &
$A_2+E_6$ & $\Z/3\Z$ \\
$2A_1+A_3$ * & $\Z/2\Z$ &
$3A_2$ & $\Z/3\Z$ \\
$2A_1+2A_3$ & $\Z/2\Z \times \Z/4\Z$ &
$4A_2$ & $(\Z/3\Z)^2$ \\
$2A_1+A_5$ & $\Z/2\Z$ &
$A_3+D_4$ & $\Z/2\Z$ \\
$2A_1+D_4$ & $\Z/2\Z$ &
$A_3+D_5$ & $\Z/4\Z$ \\
$2A_1+D_5$ & $\Z/2\Z$ &
$2A_3$ * & $\Z/2\Z$ \\
$2A_1+D_6$ & $(\Z/2\Z)^2$ &
$2A_4$ & $\Z/5\Z$ \\
$3A_1+A_3$ & $\Z/2\Z$ &
$A_7$ * & $\Z/2\Z$ \\
$3A_1+D_4$ & $(\Z/2\Z)^2$ &
$A_8$ & $\Z/3\Z$ \\
$4A_1$ * & $\Z/2\Z$ &
$2D_4$ & $(\Z/2\Z)^2$ \\
$4A_1+A_2$ & $\Z/2\Z$ &
$D_8$ & $\Z/2\Z$
\end{tabular}
\label{table:deg1}
\begin{tablenotes}
\item [*] Each of these five root systems may be embedded into $E_8$ in two
  distinct ways.  In all five cases, one way results in trivial Brauer
  group; the other results in the Brauer group shown in the table.
\item [$\dagger$] These sub-root systems do not arise from del Pezzo surfaces~\cite{Urabe:S1981}.
\end{tablenotes}
\end{threeparttable}
\end{table}

\bibliographystyle{abbrv}
\bibliography{brsing}

\end{document}